\numberwithin{equation}{section}
\theoremstyle{plain}
\newtheorem{thm}{Theorem}[section]
\newtheorem{cor}[thm]{Corollary}
\theoremstyle{definition}
\newtheorem{deff}[thm]{Definition}
\theoremstyle{remark}
\newtheorem{rmk}[thm]{\bf Remark}
\def\mG{\mathcal{G}}
\def\xra{\xrightarrow[]{}}
\def\a{\alpha}
\def\b{\beta}
\newcommand{\Isoo}{\operatorname{Iso}}
\newcommand{\spann}{\operatorname{span}}
\def\-{\text{-}}
\newcommand{\KP}{\operatorname{KP}}
\begin{document}

\title{A note on the centraliser of a subalgebra of Steinberg algebra}

\author{Roozbeh Hazrat}
\address{
Centre for Research in Mathematics and Data Sceince\\
Western Sydney University\\
Australia} \email{r.hazrat@westernsydney.edu.au}

\author{Huanhuan Li}

\address{ 1 School of Mathematical Sciences, Anhui University, Hefei, Anhui Province, China}
\address{
2 Centre for Research in Mathematics and Data Sceince\\
Western Sydney University\\
Australia} \email{h.li@westernsydney.edu.au}

\subjclass[2010]{22A22, 18B40,19D55}

\keywords{ample groupoid, Steinberg algebra, centraliser, Leavitt path algebra, diagonal of Leavitt path algebra, commutative core of Leavitt path algebra}

\date{\today}

\begin{abstract} 
For an ample Hausdorff groupoid $\mG$, and the Steinberg algebra $A_R(\mG)$ with coefficients in the commutative ring $R$ with unit,  we describe the centraliser of subalgebra $A_R(U)$ with $U$ an open closed invariant subset of unit space of $\mG$. In particular, we obtain that the algebra of  the interior of the isotropy is indeed the centraliser of the diagonal subalgebra of Steinberg algebra.  This will unify several results in the literature and  the corresponding results for Leavitt path algebras follow. 
\end{abstract}

\maketitle

\section{Introduction}

Let $E$ be a graph and $L_R(E)$  the Leavitt path algebra associated to $E$ with coefficients in the commutative ring $R$. The $R$-subalgebra of $L_R(E)$ generated by all the monomials $aa^*$, where $a$ is a path in $E$, is called the 
\emph{diagonal subalgebra} and denoted by $D_R(E)$. The \emph{commutative core} subalgebra  of $L_R(E)$ is $R$-subalgebra generated by all the monomials $aba^*$, where $a$ is a path and $b$ is a cycle with no exit and denoted by $M_R(E)$.  The commutative core subalgebra of a Leavitt path algebras were considered by Gil Cantoa and Nasr-Isfahani \cite{gil}. They prove that $M_R(E)$ is a maximal commutative subalgebra of $L_R(E)$ which coincides with  the centraliser of $D_R(E)$. They also raised an open question as to which $R$ and $E$, the Leavitt path algebra $L_R(E)$ has the property that $Z(L_R(E)) = M_R(E)$, where $Z(L_R(E))$ is the centre of $L_R(E)$. Their results are the algebraic analogue of an earlier work by Nagy and Reznikoff who introduced the $M_R(E)$ in the setting of graph $C^*$-algebras and proved that it is a masa~\cite{nagy}. A key tool in the above papers is the conditional expectation which was used in \cite{nagy} and then developed in \cite{gil} in the algebraic setting. The diagonal subalgebra $D_R(E)$ and consequently the core subalgebra $M_R(E)$ play vital roles in the structure and classification theory of graph algebras~\cite{carl}. 

In this note we consider the centraliser of a subalgebra of Steinberg algebra. In particular, we show that the core is indeed the centraliser of the diagonal subalgebra of Steinberg algebra. Despite the fact that these algebras are much more general than graph algebras, the proofs we present here are very short. Our approach does not require the use of conditional expectation and therefore it is direct and short.  Specialising to Leavitt path algebras we obtain the result of \cite{gil} and answer the open question raised there. 

\section{The centraliser of a subalgebra of Steinberg algebra}

We first recall the notions of ample groupoids and Steinberg algebras. We describe the centraliser of a subalgebra  associated to an open invariant subset of the unit space of the given groupoid. In particular, we show that the algebra of  the interior of the isotropy is indeed the centraliser of the diagonal subalgebra of Steinberg algebra; and we translate this to the settings of Leavitt path algebras and Kumjian--Pask algebras.

\subsection{Ample groupoids and Steinberg algebras}

A groupoid is a small category in which every morphism is invertible. For a groupoid $\mG$ and  $g\in\mG$,  denote $s(g):=g^{-1}g$ and $r(g):=gg^{-1}$. The pair $(g_1,g_2)$ is composable if and only if $r(g_2)=s(g_1)$. The set
$\mG^{(0)}:=s(\mG)=r(\mG)$ is called the \emph{unit space} of $\mG$.  The \emph{isotropy group} at a unit $u$ of $\mG$ is the group ${\rm Iso}(u) =\{g\in\mG \mid s(g)=r(g)=u\}$. Let ${\rm Iso}(\mG)=\bigsqcup_{u\in\mG^{(0)}}{\rm Iso}(u)$. For
$U,V\in\mG$, we define
\[
    UV=\big \{g_1 g_2 \mid g_1\in U, g_2\in V \text{ and } r(g_2)=s(g_1)\big\}.
\]

A \emph{topological groupoid} is a groupoid endowed with a topology under which the inverse map
is continuous, and such that composition is continuous with respect to the relative
topology on $\mG^{(2)} := \{(g_1,g_2) \in \mG \times \mG : s(g_1) = r(g_2)\}$ inherited from
$\mG\times \mG$. An \emph{\'etale} groupoid is a topological groupoid $\mG$ such that the
domain map $s:\mG \rightarrow \mG^{(0)}$ is a local homeomorphism. In this case, the range map $r$ is also a local
homeomorphism. Throughout this article, we assume that the \'etale groupoids are locally compact Hausdorff. 
An \emph{open bisection} of $\mG$ is an open subset $U\subseteq \mG$ such
that $s|_{U}$ and $r|_{U}$ are homeomorphisms onto an open subset of $\mG^{(0)}$. If $\mG$ is an \'etale groupoid, then there is a
base for the topology on $\mG$ consisting of open bisections with compact closure. As demonstrated in \cite{st}, if $\mG^{(0)}$ is totally disconnected and $\mG$ is \'etale, then there is a basis for the topology on $\mG$ consisting of compact open bisections. We say
that an \'etale groupoid $\mG$ is \emph{ample} if there is a basis consisting of compact open bisections for its topology. For an ample groupoid $\mG$ and a commutative ring $R$ with the discrete topology,  the $R$-value maps on $\mG$ which are locally constant and have compact support constitute an algebra called \emph{Steinberg algebra} and denoted by $A_R(\mG)$. One can show that \cite[\S 4]{ch}
\begin{align*}
A_R(\mG )& =  \Big \{\sum_{B\in F} r_B 1_B \mid  F: \text{ mutually disjoint finite collection of compact open bisections} \Big \}, \\
& \text{ addition and scalar multiplication of functions are pointwise},\\
& \text{ multiplications on the generators are }  1_B 1_D = 1_{BD}.
\end{align*}

Throughout this note we work with the Steinberg algebras associated to the ample Hausdorff groupoid $\mG$, its  unit space $\mG^{(0)}$ and its the interior of isotropy  $\Isoo(\mG)^\circ$.  

\subsection{The centraliser of a subalgebra of Steinberg algebra}

Let $\mathcal{G}$ be an ample Hausdorff groupoid. 
One can consider $A_R(\mG^{(0)})$ and $A_R(\Isoo(\mG)^\circ)$ as subalgebras of $A_R(\mG)$.  For basics of the theory of Steinberg algebras we refer the reader to \cite{ch,st}.

\begin{deff}
Let $\mG$ be an ample Hausdorff groupoid and $A_R(\mG)$ the Steinberg algebra associated to $\mG$. We call the subalgebra $A_R(\Isoo(\mG)^\circ)$ the \emph{core} algebra of $A_R(\mG)$.
\end{deff}

We need the following notation: For a subset $X$ of a ring $A$, its \emph{centraliser} (or \emph{commutant})  is defined as $C_A(X):= \{a \in A \mid xa=ax, \text{ for all }  x \in X \}$. For a subring $B\subseteq A$, we have $C_A(B)=B$ if and only if $B$ is a maximal commutative subring of $A$. 

\begin{thm}\label{maniiik}
Let $\mG$ be an ample Hausdorff groupoid and $A_R(\mG)$ the Steinberg algebra associated to $\mG$. Let $U$ be an open closed invariant subset of $ \mG^{(0)}$. Then
\begin{equation}\label{coreyy}
C_{A_R(\mG)}\big (A_R(U)\big) = A_R(\Isoo(\mG)^\circ)+A_R(\mG_{\mG^{(0)}\setminus U}).
\end{equation}
\end{thm}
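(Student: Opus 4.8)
The plan is to convert the centraliser condition into a condition on the supports of functions. Since $A_R(U)$ is spanned over $R$ by the characteristic functions $1_V$ of compact open subsets $V\subseteq U$, an element $f\in A_R(\mG)$ lies in $C_{A_R(\mG)}\big(A_R(U)\big)$ if and only if $1_V\ast f=f\ast 1_V$ for all such $V$. The standard convolution formulas give $(1_V\ast f)(g)=1_V\big(r(g)\big)\,f(g)$ and $(f\ast 1_V)(g)=f(g)\,1_V\big(s(g)\big)$, so $f$ centralises $A_R(U)$ exactly when $\big(1_V(r(g))-1_V(s(g))\big)f(g)=0$ for every $g\in\mG$ and every compact open $V\subseteq U$.

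The first main step is to show that this holds if and only if
\[
\supp(f)\ \subseteq\ \big\{g\in\mG : s(g)=r(g)\in U\big\}\ \sqcup\ \mG_{\mG^{(0)}\setminus U}.
\]
For the forward direction, suppose some $g\in\supp(f)$ fails to lie in the right-hand side; then $r(g)\in U$ or $s(g)\in U$, while not both $r(g)=s(g)\in U$. Using that $\mG^{(0)}$ is locally compact Hausdorff with a basis of compact open sets, one can choose a compact open $V\subseteq U$ containing exactly one of $r(g)$ and $s(g)$: if both lie in $U$ they must be distinct, so Hausdorffness separates them inside $U$; if only one lies in $U$, a small enough compact open neighbourhood of it contained in $U$ works. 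Then $\pm f(g)=\big(1_V(r(g))-1_V(s(g))\big)f(g)=0$, contradicting $g\in\supp(f)$. Conversely, on $\{g:s(g)=r(g)\in U\}$ one has $1_V(r(g))=1_V(s(g))$ trivially, while on $\mG_{\mG^{(0)}\setminus U}$ both of these values vanish when $V\subseteq U$; hence the pointwise identity holds at every $g$, so such an $f$ centralises $A_R(U)$.

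Next I would use that every element of $A_R(\mG)$ is locally constant with compact support, so $\supp(f)$ is compact open, and — since $U$ is clopen — so is $\supp(f)\cap s^{-1}(U)$. Thus any $f$ in the centraliser decomposes in $A_R(\mG)$ as $f=f_1+f_2$, where $f_1$ is the restriction of $f$ to $\supp(f)\cap s^{-1}(U)$ and $f_2$ the restriction to $\supp(f)\setminus s^{-1}(U)$. By the support description just proved, $\supp(f_2)\subseteq\mG_{\mG^{(0)}\setminus U}$, which is open, so $f_2\in A_R(\mG_{\mG^{(0)}\setminus U})$; and $\supp(f_1)\subseteq\Isoo(\mG)$, but being a compact open subset of $\mG$ it is automatically contained in the interior $\Isoo(\mG)^\circ$, so $f_1\in A_R(\Isoo(\mG)^\circ)$. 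This gives the inclusion $\subseteq$ in \eqref{coreyy}. The reverse inclusion reduces to checking, again via the pointwise identity, that both summands on the right of \eqref{coreyy} centralise $A_R(U)$: a function supported in $\Isoo(\mG)$ does so because $r(g)=s(g)$ throughout its support, and a function $f$ supported in $\mG_{\mG^{(0)}\setminus U}$ does so because in fact $1_V\ast f=f\ast 1_V=0$ for every compact open $V\subseteq U$.

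The step I expect to need the most care is the seemingly trivial observation that an open subset of $\Isoo(\mG)$ lies in $\Isoo(\mG)^\circ$: this is precisely where the interior of the isotropy, rather than the full isotropy, is forced into the answer, and it rests on the fact that supports of Steinberg-algebra elements are open. The only other point to watch is that clopenness of $U$, rather than mere openness, is what keeps $f_1$ and $f_2$ compactly supported and hence genuine elements of $A_R(\mG)$.
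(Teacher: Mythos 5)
Your proof is correct and follows essentially the same route as the paper's: the heart of both arguments is the same separation step, namely choosing a compact open $V\subseteq U$ containing exactly one of $s(g)$, $r(g)$ to force $f(g)=0$ at any point of the support where the isotropy condition fails, together with the observation that the resulting open subset of $\Isoo(\mG)$ lands in $\Isoo(\mG)^\circ$. The only difference is in packaging: you phrase everything through the pointwise convolution identities $(1_V\ast f)(g)=1_V(r(g))f(g)$ and $(f\ast 1_V)(g)=f(g)1_V(s(g))$ and a single clopen decomposition $f=f|_{s^{-1}(U)}+f|_{s^{-1}(U)^c}$, whereas the paper fixes a presentation $f=\sum r_i 1_{V_i}$ by disjoint compact open bisections, splits each $V_i$ as $V_i\cap s^{-1}(U)$ and $V_i\cap s^{-1}(U^c)$, and extracts the coefficient $r_k$ by hand from the disjointness of the translates $WV_i^1$; your version avoids that bookkeeping and is a little cleaner, but it is not a genuinely different argument.
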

\begin{proof}
Let $V \subseteq \Isoo(\mG)^\circ$ be a compact open bisection and $W \subseteq \mG^{(0)}$ a compact open set. We first observe that $WV=VW$. If $x \in WV$, then $x=wv$, where $w\in W \subseteq \mG^{(0)} $ and $v\in V$. Since $s(v)=r(v)=w$, it follows that $x=wvw=vw \in VW$. The converse follows from symmetry.  
Now for any element $x\in A_R(\Isoo(\mG)^\circ)$ and $y\in  A_R(U)$, write 
$x=\sum r_i 1_{V_i}$ and $y=\sum s_j 1_{W_j}$, where $V_i \subset \Isoo(\mG)^\circ$ and $W_j \subseteq U$ are compact open bisections. Since $V_i$ and $W_j$ commutes, we have $xy=yx$. This shows that  $A_R(\Isoo(\mG)^\circ) \subseteq C_{A_R(\mG)}\big (A_R(U)\big)  .$  Clearly that $A_R(\mG_{\mG^{(0)}\setminus U})\subseteq C_{A_R(\mG)}(A_R(U))$ as for any $f\in A_R(\mG_{\mG^{(0)}\setminus U})$ and $g\in A_R(U)$, we have $fg=0=gf$. 

Now suppose $f\in A_R(\mG)$ commutes with elements of $A_R(U)$. Write $f=\sum_{i=1}^n r_i 1_{V_i}$, where $0\not = r_i \in R$ and $V_i \subset \mG$  are disjoint compact open bisections. 
For each $V_i$, we have that $V_i^1=d^{-1}(d(V_i)\cap U)\cap V_i$ and $V_i^2=d^{-1}(d(V_i)\cap U^c)\cap V_i $ are compact with $U^c$ the completion of $U$. It follows that $1_{V_i}=1_{V_i^1}+1_{V_i^2}$. Observe that 
$1_{V_i^2}\in A_R(\mG_{\mG^{(0)}\setminus U})$. It suffices to show that for each $i$ we have $1_{V_i^1}\in A_R(\Isoo(\mG)^{o})$. That is, we need to show that $V_i^1\subseteq \Isoo(\mG)$, for all $i$'s. Suppose that there is $x \in V_k^1$ such that $s(x)\not = r(x)$. Since $\mG^{(0)}$ is Hausdorff, there is a compact open set $W \subseteq U \subseteq \mG^{(0)}$ such that $s(x)\in W$ but $r(x) \not \in W$. It follows that $x\in V_k^1W$ but $x\not \in WV_k^1$. Since $\sum_{i=1}^n r_i1_{V_i^1}\in C_{A_R(\mG)}(A_R(U))$, we have $\sum_{i=1}^n r_i1_{V_i^1}1_W=1_W \sum_{i=1}^n r_i1_{V_i^1}$. It follows that 
\begin{equation}\label{hghyty}
\Big( \sum_{i=1}^n r_i 1_{WV_i^1}\Big)(x) =\Big(\sum_{i=1}^n r_i 1_{V_i^1 W}\Big)(x).
\end{equation}
Note that $WV_1^1, \dots WV_n^1$ are disjoint and so are $V_1^1W,\dots V_n^1W$. Thus (\ref{hghyty}) reduces to $r_k=0$ which is a contradiction. So $s(x)=r(x)$. This implies that $V_i^1 \subseteq \Isoo(\mG)$ and consequently $1_{V_i^1} \in A_R(\Isoo(\mG)^\circ)$. 
\end{proof}

By Theorem \ref{maniiik} when $U=\mathcal{G}^{(0)}$, we have $C_{A_R(\mathcal{G})}(A_R(\mathcal{G}^{(0)}))=A_R(\Isoo(\mathcal{G})^o)$ which was given by \cite[Proposition 2.2]{st2}.


For the following consequence we assume that  the interior of the isotropy $\Isoo(\mG)^\circ$, is abelian (that is, it is a bundle of abelian groups). A large class of topological groupoids have abelian isotropy, such as Deaconu-Renault groupoids~\cite{rr} (and in particular graph groupoids \S\ref{hfyhf}). 

\begin{cor}\label{colkfhgy}
Let $\mG$ be an ample Hausdorff groupoid and $A_R(\mG)$ the Steinberg algebra associated to $\mG$. Suppose $\Isoo(\mG)^\circ$ is abelian. Then we have the followings. 

\begin{enumerate}[\upshape(i)]

\item The algebra $A_R(\Isoo(\mG)^\circ)$ is a maximal commutative subalgebra of $A_R(\mG)$.

\medskip 

\item The Steinberg algebra $A_R(\mG)$ is commutative if and only if $Z(A_R(\mG))=C_{A_R(\mG)}\big (A_R(\mG^{(0)}\big)$.

\end{enumerate}
\end{cor}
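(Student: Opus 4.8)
The plan is to deduce both parts of Corollary \ref{colkfhgy} from Theorem \ref{maniiik} applied with $U = \mG^{(0)}$, together with the hypothesis that $\Isoo(\mG)^\circ$ is abelian. Taking $U = \mG^{(0)}$ in Theorem \ref{maniiik} gives $\mG^{(0)} \setminus U = \emptyset$, so the second summand vanishes and we obtain the clean identity $C_{A_R(\mG)}\big(A_R(\mG^{(0)})\big) = A_R(\Isoo(\mG)^\circ)$.

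For part (i), first I would observe that since $\Isoo(\mG)^\circ$ is a bundle of abelian groups, the algebra $A_R(\Isoo(\mG)^\circ)$ is commutative: indeed any two compact open bisections $V, W \subseteq \Isoo(\mG)^\circ$ satisfy $1_V 1_W = 1_{VW} = 1_{WV} = 1_W 1_V$, because composability of $v \in V$ and $w \in W$ forces $s(v) = r(v) = s(w) = r(w) =: u$ and then $vw = wv$ inside the abelian group $\Isoo(u)$. Next, a commutative subalgebra $B \subseteq A$ is maximal commutative precisely when $C_A(B) = B$. Since $A_R(\mG^{(0)}) \subseteq A_R(\Isoo(\mG)^\circ)$, taking centralisers reverses the inclusion, so $C_{A_R(\mG)}\big(A_R(\Isoo(\mG)^\circ)\big) \subseteq C_{A_R(\mG)}\big(A_R(\mG^{(0)})\big) = A_R(\Isoo(\mG)^\circ)$; and the reverse inclusion $A_R(\Isoo(\mG)^\circ) \subseteq C_{A_R(\mG)}\big(A_R(\Isoo(\mG)^\circ)\big)$ is just commutativity of $A_R(\Isoo(\mG)^\circ)$. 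Hence $C_{A_R(\mG)}\big(A_R(\Isoo(\mG)^\circ)\big) = A_R(\Isoo(\mG)^\circ)$, which is the maximality assertion.

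For part (ii), one direction is trivial: if $A_R(\mG)$ is commutative then every centraliser equals the whole algebra, so in particular $Z(A_R(\mG)) = A_R(\mG) = C_{A_R(\mG)}\big(A_R(\mG^{(0)})\big)$. For the converse, suppose $Z(A_R(\mG)) = C_{A_R(\mG)}\big(A_R(\mG^{(0)})\big)$. By the identity above the right-hand side is $A_R(\Isoo(\mG)^\circ)$, so $Z(A_R(\mG)) = A_R(\Isoo(\mG)^\circ)$. But $Z(A_R(\mG)) \subseteq C_{A_R(\mG)}\big(A_R(\mG^{(0)})\big) = A_R(\Isoo(\mG)^\circ)$ always holds, and the hypothesis upgrades this to equality; the content we need is the reverse, namely that every element of $A_R(\Isoo(\mG)^\circ)$ is central in $A_R(\mG)$. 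Given that, writing any $f \in A_R(\mG)$ as $f = \sum r_i 1_{V_i}$ with $V_i$ compact open bisections, each generator $1_{V_i}$ will then be forced to be central, and I would chase this down to show $V_i \subseteq \Isoo(\mG)$ for all $i$ — a single bisection being central means it commutes with all of $A_R(\mG^{(0)})$, and the Hausdorff separation argument from the proof of Theorem \ref{maniiik} shows such a bisection must lie in $\Isoo(\mG)$; then the abelian hypothesis and centrality give that the support of $f$ is unit-fibred and abelian, so $A_R(\mG) = A_R(\Isoo(\mG)^\circ)$ is commutative.

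The main obstacle is the converse in part (ii): extracting genuine information from the equality $Z(A_R(\mG)) = A_R(\Isoo(\mG)^\circ)$. The inclusion $Z(A_R(\mG)) \subseteq A_R(\Isoo(\mG)^\circ)$ is automatic, so the hypothesis really says $A_R(\Isoo(\mG)^\circ) \subseteq Z(A_R(\mG))$, i.e. elements supported on the interior of the isotropy commute with everything in $A_R(\mG)$ — including with bisections that move units. The care needed is to argue that $1_V$ for a bisection $V \subseteq \mG$ with $s(x) \neq r(x)$ for some $x \in V$ cannot commute with $1_W$ for a suitably chosen compact open $W \subseteq \mG^{(0)}$ separating $s(x)$ from $r(x)$; but note that here $1_V$ itself need not be in $A_R(\Isoo(\mG)^\circ)$, so one applies the centrality to the unit-space bisections $1_W$ and reads off $r_k = 0$ exactly as in \eqref{hghyty}. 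Decomposing a general $f$ into its isotropy part and its complement as in the proof of Theorem \ref{maniiik}, and noting the complement part is non-central unless it is zero, completes the argument.
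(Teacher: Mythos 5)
Your proposal is correct and takes essentially the same route as the paper: part (i) follows from the identity $C_{A_R(\mG)}\big(A_R(\mG^{(0)})\big)=A_R(\Isoo(\mG)^\circ)$ (Theorem~\ref{maniiik} with $U=\mG^{(0)}$) together with monotonicity of centralisers, and part (ii) from the observation that the hypothesis forces $A_R(\mG^{(0)})\subseteq Z(A_R(\mG))$, after which the Hausdorff separation argument shows every compact open bisection lies in $\Isoo(\mG)$. The only (cosmetic) difference is that the paper runs the separation argument on a single bisection $V$ via $UV=VU$ rather than on a sum $\sum r_i 1_{V_i}$ as in \eqref{hghyty}.
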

\begin{proof}

(1) Since $\Isoo(\mG)^\circ$ is abelian, the subalgebra $A_R(\Isoo(\mG)^\circ)$ is commutative. Now the results follows from Theorem~\ref{maniiik}.

(2) Suppose  $Z(A_R(\mG))=C_{A_R(\mG)}\big (A_R(\mG^{(0)}\big)$. Let $V$ be a compact open bisection of $\mG$. Suppose that there is $x \in V$ such that $s(x)\not = r(x)$. Since $\mG^{(0)}$ is Hausdorff, there is a compact open set $U \subseteq \mG^{(0)}$ such that $s(x)\in U$ but $r(x) \not \in U$. It follows that $x\in VU$ but $x\not \in UV$. But by assumption, $1_U \in A_R(\mG^{(0)}) \subseteq Z(A_R(\mG))$ and thus $1_{UV}=1_U 1_V = 1_V 1_U= 1_{VU}$, i.e., $UV=VU$ which is a contradiction. Thus $V \subseteq \Isoo(\mG)$.  So $A_R(\mG)=A_R(\Isoo (\mG)^{o})$, and therefore $A_R(\mG)$ is commutative.  The converse is clear. 
\end{proof}

If $\mG$ is effective, then $\mG^{(0)}=\Isoo(\mG)^\circ$, therefore by Corollary~\ref{colkfhgy}(i), $A_R(\mG^{(0)})$ is a maximal commutative subalgebra of $A_R(\mG)$. Since topologically principal is a stronger condition than effectiveness, we obtain~\cite[Lemma 2.1]{bosa} from Corollary~\ref{colkfhgy}

\subsection{Commutative cores of Leavitt path algebras and Kumjian--Pask algebras}\label{hfyhf}
Let $E=(E^0,E^1,s,r)$ be a directed graph. We refer the reader to \cite{lpabook} for the basics on the theory of Leavitt path algebras. We denote by $E^{\infty}$ the set of
infinite paths in $E$. Set
$
X := E^{\infty}\cup  \{\mu\in E^{*}   \mid   r(\mu) \text{ is not a regular vertex}\}.
$
Let
\[
\mG_{E} := \big \{(\a x,|\a|-|\b|, \b x) \mid   \a, \b\in E^{*}, x\in X, r(\a)=r(\b)=s(x)\big \}.
\]
We view each $(x, k, y) \in \mG_{E}$ as a morphism with range $x$ and source $y$. The
formulas $(x,k,y)(y,l,z)= (x,k + l,z)$ and $(x,k,y)^{-1}= (y,-k,x)$ define composition
and inverse maps on $\mG_{E}$ making it a groupoid with $\mG_{E}^{(0)}=\{(x, 0, x) \mid
x\in X\}$ which we identify with the set $X$.
Next, we describe a topology on $\mG_{E}$. For $\mu\in E^{*}$ define
\[
Z(\mu)= \{\mu x \mid x \in X, r(\mu)=s(x)\}\subseteq X.
\]
For $\mu\in E^{*}$ and a finite $F\subseteq s^{-1}(r(\mu))$, define
\[
Z(\mu\setminus F) = Z(\mu) \setminus \bigcup_{\a\in F} Z(\mu \a).
\]
The sets $Z(\mu\setminus F)$ constitute a basis of compact open sets for a locally
compact Hausdorff topology on $X=\mG_{E}^{(0)}$ (see \cite[Theorem 2.1]{we}).
For $\mu,\nu\in E^{*}$ with $r(\mu)=r(\nu)$, and for a finite $F\subseteq E^{*}$ such
that $r(\mu)=s(\a)$ for $\a\in F$, we define
\[
Z(\mu, \nu)=\{(\mu x, |\mu|-|\nu|, \nu x) \mid  x\in X, r(\mu)=s(x)\},
\]
and then
\[
Z((\mu, \nu)\setminus F) = Z(\mu, \nu) \setminus \bigcup_{\a\in F}Z(\mu\a, \nu\a).
\]
The sets $Z((\mu, \nu)\setminus F)$ constitute a basis of compact open bisections for a
topology under which $\mG_{E}$ is an ample groupoid. We have an isomorphism 
the map
\begin{align*}
\pi_{E} : L_{R}(E) &\longrightarrow A_{R}(\mG_{E})\\
v &\longmapsto 1_{Z(v)},\\
e &\longmapsto 1_{Z(e, r(e))},\\
e^* &\longmapsto 1_{Z(r(e), e)},
\end{align*} 
where $v\in E^{0}$ and $e\in E^{1}$. Recall that the diagonal subalgebra $D_R(E)$ is generated by monomials $aa^*$, where $a$ is a path in $E$, whereas the commutative core subalgebra  $M_R(E)$ is generated the monomials $aba^*$, where $a$ is a path and $b$ is a cycle with no exit. 
Observe that the isomorphism $\pi_E$ restricts to isomorphisms 
\begin{align}\label{assteinberg}
D_R(E) &\longrightarrow A_R(\mG^{(0)}) \\
M_R(E) &\longrightarrow  A_R(\Isoo(\mG)^{\circ}). \notag
\end{align}

We can recover the results of \cite{gil} and answer the open question raised there \cite[p. 245]{gil}. 

\begin{cor}
Let $E$ be an arbitrary graph and $L_R(E)$ the Leavitt path algebra associated to $E$. Then 

\begin{enumerate}[\upshape(i)]

\item The centraliser of the diagonal algebra $D_R(E)$ is the core algebra $M_R(E)$. 

\item The core algebra $M_R(E)$ is the maximal commutative subalgebra of $L_R(E)$. 

\item If $Z(L_R(E))= M_R(E)$ then $L_R(E)$ is either $R$ or $R[x,x^{-1}]$, i.e, the graph $E$ is either a single vertex or a vertex with a loop. 

\end{enumerate}
\end{cor}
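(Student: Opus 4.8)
The plan is to deduce all three parts by transporting Theorem~\ref{maniiik} and Corollary~\ref{colkfhgy} across the isomorphism $\pi_E\colon L_R(E)\to A_R(\mG_E)$, using the identifications \eqref{assteinberg}. For part~(i), I would first note that the unit space $\mG_E^{(0)}=X$ is an open closed invariant subset of itself, so Theorem~\ref{maniiik} with $U=\mG_E^{(0)}$ gives $C_{A_R(\mG_E)}(A_R(\mG_E^{(0)}))=A_R(\Isoo(\mG_E)^\circ)$ (the term $A_R(\mG_{\mG^{(0)}\setminus U})$ vanishes). Since $\pi_E$ is an isomorphism, it carries centralisers to centralisers; combined with $\pi_E(D_R(E))=A_R(\mG_E^{(0)})$ and $\pi_E(M_R(E))=A_R(\Isoo(\mG_E)^\circ)$, this yields $C_{L_R(E)}(D_R(E))=M_R(E)$.

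For part~(ii), the key input is that the graph groupoid $\mG_E$ is a Deaconu--Renault groupoid and hence, by \cite{rr}, has abelian interior of isotropy $\Isoo(\mG_E)^\circ$. Then Corollary~\ref{colkfhgy}(i) says $A_R(\Isoo(\mG_E)^\circ)$ is a maximal commutative subalgebra of $A_R(\mG_E)$, and applying $\pi_E^{-1}$ and \eqref{assteinberg} transports this to the statement that $M_R(E)$ is a maximal commutative subalgebra of $L_R(E)$.

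For part~(iii), I would argue as follows. If $Z(L_R(E))=M_R(E)$, then under $\pi_E$ this reads $Z(A_R(\mG_E))=A_R(\Isoo(\mG_E)^\circ)$; but by part~(i) (in its Steinberg form) the right-hand side equals $C_{A_R(\mG_E)}(A_R(\mG_E^{(0)}))$, so the hypothesis of Corollary~\ref{colkfhgy}(ii) holds and therefore $A_R(\mG_E)$ is commutative, i.e.\ $L_R(E)$ is commutative. It then remains to classify the graphs $E$ for which $L_R(E)$ is commutative. A commutative Leavitt path algebra forces $E$ to have at most one vertex: two distinct vertices $u\neq v$ give orthogonal idempotents, and if $E^0$ has a vertex emitting two or more edges, or a vertex that is both a source-like and range-like end, one produces noncommuting elements among $v,e,e^*$; a careful case analysis on the number of vertices and edges shows the only surviving graphs are the single vertex (giving $L_R(E)\cong R$) and the single vertex with one loop (giving $L_R(E)\cong R[x,x^{-1}]$). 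Conversely both of these are commutative, closing the loop with part~(ii) being vacuous-consistent in these cases.

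The main obstacle is the graph-theoretic classification in part~(iii): everything before it is a mechanical transfer along $\pi_E$, but pinning down exactly which graphs yield a commutative $L_R(E)$ requires checking that \emph{any} graph other than the single vertex or the single loop produces a noncommutative relation (e.g.\ $ee^*\neq e^*e$ when there is a genuine edge that is not a loop at a one-vertex graph, or $v w=0\neq$ something when there are two vertices), and handling the non-regular/infinite-emitter cases uniformly. One convenient shortcut is to invoke that $L_R(E)$ is commutative iff $\mG_E$ is commutative as a groupoid iff $\mG_E=\Isoo(\mG_E)$, which by the description of $\mG_E$ forces $|E^1|\le 1$ and $|E^0|=1$; I would use this groupoid-level reformulation to keep the casework short.
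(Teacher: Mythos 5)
Your treatment of (i) and (ii), and the first half of (iii), is exactly the paper's route: the paper's proof is the one-line ``immediately follow from Theorem~\ref{maniiik} and Corollary~\ref{colkfhgy} by considering $\mG_E$'', i.e.\ the mechanical transfer along $\pi_E$ and the identifications \eqref{assteinberg} that you spell out, together with the fact that the Deaconu--Renault groupoid $\mG_E$ has abelian interior of isotropy. That part is correct and needs no further comment.

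The gap is in the classification step of (iii), which the paper simply asserts and you try to justify. Your justification does not work: the observation that two distinct vertices $u\neq v$ give orthogonal idempotents proves nothing about commutativity, since orthogonal idempotents commute ($uv=0=vu$). Indeed, if $E$ consists of two isolated vertices then $L_R(E)\cong R\times R$ is commutative, $M_R(E)=L_R(E)=Z(L_R(E))$, and yet $L_R(E)$ is neither $R$ nor $R[x,x^{-1}]$; so ``commutative forces at most one vertex'' is false, and your groupoid reformulation ``$\mG_E=\Isoo(\mG_E)$ forces $|E^0|=1$'' fails for the same example (a groupoid can equal its isotropy and still have many units). What commutativity actually forces is that every edge is a loop and every vertex carries at most one loop, so $E$ is a disjoint union of isolated vertices and isolated loops and $L_R(E)$ is a direct sum of copies of $R$ and $R[x,x^{-1}]$; the conclusion as literally stated in (iii) only holds under a connectedness hypothesis (or with ``is'' read as ``is a direct sum of copies of''). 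To be fair, this defect is inherited from the statement itself --- the paper's own proof of (iii) is just ``Thus the graph $E$ is either a single vertex or a vertex with a loop'' with no argument --- but since you chose to supply the missing casework, the argument you supply is the part that breaks.
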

\begin{proof}

(i) and (ii) immediately follow follow from Theorem~\ref{maniiik} and Corollary~\ref{colkfhgy} by considering the graph groupoid $\mG_E$ (see \ref{assteinberg}). 

(iii) By Corollary~\ref{colkfhgy}, $L_R(E)$ is commutative. Thus the graph $E$ is either a single vertex or a vertex with a loop. 
\end{proof}

In a similar manner we can determine the centraliser of the diagonal algebra of Kumjian-Pask algebras as they can also be described as a Deaconu-Renault groupoid algebras. 
Let $\Lambda$ be a row-finite $k$-graph without sources and $\KP_K(\Lambda)$ the
Kumjian--Pask algebra of $\Lambda$. We refer the reader to \cite{pchr} for the basics on the theory Kumjian-Pask algebras.  Following \cite{pchr}, $\Lambda^\infty$ denotes the set of all degree-preserving functor $x : \Omega_{k}\xra \Lambda$. Here  $\Omega_k$ is the $k$-graph defined as a set by $\Omega_k = \{(m,n)
\in \mathbb{N}^k \times \mathbb{N}^k : m \le n\}$ with $d(m,n) = n-m$, $\Omega_k^0 =
\mathbb{N}^k$, $r(m,n) = m$, $s(m,n) = n$ and $(m,n)(n,p) = (m,p)$.

The \emph{diagonal algebra} $D_R(\Lambda)$ is generated by $\lambda \lambda^*$ where $\lambda \in \Lambda$. We define the \emph{core} subalgebra 
\[M_R(\Lambda):=\spann_R \, \big \langle \lambda \mu^* \mid  \lambda x = \mu x, \text{ for any } x\in \Lambda^{\infty} \big\rangle.\]

By appealing to  Theorem~\ref{maniiik} and Corollary~\ref{colkfhgy} , similar to the case of Leavitt path algebras, we can recover the results of \cite[Theorem 4.6]{ccn}. 
\begin{cor}\label{kpk}
Let $\Lambda$ be a row-finite $k$-graph with no sources and  $\KP_K(\Lambda)$ the
Kumjian--Pask algebra associated to $\Lambda$. Then 

\begin{enumerate}[\upshape(i)]

\item The centraliser of the diagonal algebra $D_R(\Lambda)$ is the core algebra $M_R(\Lambda)$. 

\item The core algebra $M_R(\Lambda)$ is the maximal commutative subalgebra of $\KP_R(\Lambda)$. 

\item If $Z(\KP_R(\Lambda))= M_R(\Lambda)$ then $\KP_R(\Lambda)$ is $R[x_1^{\pm 1},  \dots, x_k^{\pm 1}]$. 

\end{enumerate}
\end{cor}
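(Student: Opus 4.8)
The plan is to transfer the abstract statement (Theorem~\ref{maniiik} with $U=\Lambda^{\infty}$ the full unit space, together with Corollary~\ref{colkfhgy}) through the groupoid model of the Kumjian--Pask algebra, exactly as was done for Leavitt path algebras. First I would recall that for a row-finite $k$-graph $\Lambda$ with no sources there is a Deaconu--Renault groupoid $\mG_{\Lambda}$, built from the infinite path space $\Lambda^{\infty}$ and the shift maps, whose unit space is (identified with) $\Lambda^{\infty}$, and that there is an $R$-algebra isomorphism $\KP_{R}(\Lambda)\xrightarrow{\ \sim\ } A_{R}(\mG_{\Lambda})$ sending $\lambda\mapsto 1_{Z(\lambda)}$, $\lambda\mapsto 1_{Z(\lambda,r(\lambda))}$ on generators, in complete analogy with $\pi_{E}$; this is standard (see \cite{pchr}). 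The key identification to pin down is that this isomorphism restricts to $D_{R}(\Lambda)\xrightarrow{\sim}A_{R}(\mG_{\Lambda}^{(0)})$ and $M_{R}(\Lambda)\xrightarrow{\sim}A_{R}(\Isoo(\mG_{\Lambda})^{\circ})$. The first restriction is immediate since $D_{R}(\Lambda)$ is spanned by the $\lambda\lambda^{*}\mapsto 1_{Z(\lambda)}$ and these span $A_{R}(\mG_{\Lambda}^{(0)})$. For the second, one checks that a morphism $(x,m-n,y)$ with $x=\lambda z$, $y=\mu z$ lies in the interior of the isotropy precisely when $\lambda w=\mu w$ for all $w\in\Lambda^{\infty}$ with appropriate range, which is exactly the condition defining the spanning monomials $\lambda\mu^{*}$ of $M_{R}(\Lambda)$; this matches the description used in \cite{ccn}.

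With these identifications in hand, part~(i) is Theorem~\ref{maniiik} applied to $\mG=\mG_{\Lambda}$ and $U=\mG_{\Lambda}^{(0)}=\Lambda^{\infty}$: then $\mG^{(0)}\setminus U=\emptyset$, so \eqref{coreyy} reads $C_{A_{R}(\mG_{\Lambda})}(A_{R}(\mG_{\Lambda}^{(0)}))=A_{R}(\Isoo(\mG_{\Lambda})^{\circ})$, and pulling back along the isomorphism gives $C_{\KP_{R}(\Lambda)}(D_{R}(\Lambda))=M_{R}(\Lambda)$. Part~(ii) follows from Corollary~\ref{colkfhgy}(i), which requires that $\Isoo(\mG_{\Lambda})^{\circ}$ be abelian; this holds because $\mG_{\Lambda}$ is a Deaconu--Renault groupoid, and such groupoids have abelian (indeed, subgroups of $\Z^{k}$) isotropy, as noted in the paragraph before Corollary~\ref{colkfhgy} (cf.\ \cite{rr}). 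For part~(iii), if $Z(\KP_{R}(\Lambda))=M_{R}(\Lambda)$ then, since $M_{R}(\Lambda)=C_{\KP_{R}(\Lambda)}(D_{R}(\Lambda))$ by~(i), the hypothesis of Corollary~\ref{colkfhgy}(ii) holds, so $\KP_{R}(\Lambda)$ is commutative; hence $A_{R}(\mG_{\Lambda})=A_{R}(\Isoo(\mG_{\Lambda})^{\circ})=A_{R}(\mG_{\Lambda})$ forces every compact open bisection to lie in the isotropy, i.e.\ $\mG_{\Lambda}$ is a bundle of abelian groups over its unit space. A short graph-theoretic argument then shows $\Lambda$ must be a single vertex with one loop in each of the $k$ colours, so $\mG_{\Lambda}\cong\Lambda^{\infty}\times\Z^{k}$ and $\KP_{R}(\Lambda)\cong R[x_{1}^{\pm1},\dots,x_{k}^{\pm1}]$.

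I expect the main obstacle to be step two of the first paragraph: verifying cleanly that the Kumjian--Pask isomorphism carries $M_{R}(\Lambda)$ onto $A_{R}(\Isoo(\mG_{\Lambda})^{\circ})$. The subtlety is that the defining condition ``$\lambda x=\mu x$ for all $x\in\Lambda^{\infty}$'' for the spanning monomials of $M_{R}(\Lambda)$ must be matched against membership in the \emph{interior} of the isotropy bundle, not merely the isotropy itself; in the $k$-graph setting one must use the factorisation property and the characterisation of $\Isoo(\mG_{\Lambda})^{\circ}$ in terms of eventual agreement of shifts (this is worked out in \cite{ccn}, and also underlies the effectiveness/aperiodicity dichotomy for $k$-graphs). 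Everything else is a formal consequence of the two general results already proved, plus the observation that Deaconu--Renault groupoids have abelian isotropy; I would simply cite \cite{pchr,ccn,rr} for the model-theoretic inputs and keep the proof as short as the Leavitt path algebra case.
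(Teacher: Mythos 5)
Your proposal is correct and follows essentially the same route as the paper, which simply invokes Theorem~\ref{maniiik} and Corollary~\ref{colkfhgy} through the Deaconu--Renault groupoid model of $\KP_R(\Lambda)$, exactly as in the Leavitt path algebra case; the identification of $M_R(\Lambda)$ with $A_R(\Isoo(\mG_\Lambda)^\circ)$ that you flag as the main point is indeed the only substantive input, and the paper likewise delegates it to \cite{ccn}.
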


\begin{rmk}
In the setting of groupoid $C^*$-algebras, the general version of Theorem~\ref{maniiik} yet to be established. In this setting, with an extra assumption that $\Isoo(\mG)^\circ$ is abelian, the $C^*$-version of Theorem~\ref{maniiik} was established in \cite{toke} (see \cite[Corollary 5.3, 5.4]{toke}), and also \cite[Theorem~4.3]{brown}). The latter Theorem was used to give the $C^*$-version of Corollary~\ref{kpk} (i) and (ii) (see \cite[Corollary 4.6]{brown}). 
We finish the note by remarking that Exel, Clark and Pardo~\cite{cep} have established the algebraic version of uniqueness theorem of \cite[Theorem 3.1]{brown}. Namely, let $\mG$ be a second-countable, ample, Hausdorff groupoid, $R$ a unital commutative ring, and let $\pi: A_R(\mG) \rightarrow A$ be a homomorphism of rings.  Then $\pi$ is injective if and only if the restriction of $\pi$ on the core subalgebra $A_R(\Isoo(\mG)^{\circ})$ is injective. 

\end{rmk}

\section{Acknowledgements} The authors would like to acknowledge Australian Research Council grant DP160101481. They would like to thank Aidan Sims for fruitful discussions and for pointing them to the papers \cite{brown,toke}.


\begin{thebibliography}{9999}



  
\bibitem{lpabook} G. Abrams, P. Ara, M. Siles Molina, Leavitt path algebras, Lecture Notes in Mathematics {\bf 2191} Springer, 2017.







\bibitem{bosa} P. Ara, J. Bosa, R. Hazrat, A. Sims, {\it Reconstruction of graded groupoids from graded Steinberg algebras}, Forum Math. {\bf 29(5)} (2017), 1023--1037. 

\bibitem{pchr} G. Aranda Pino, J. Clark, A. an Huef,  I. Raeburn, {\it Kumjian--Pask
  algebras of higher-rank graphs}, Trans. Amer. Math. Soc. $\mathbf{365}$ (2013), no.
    7, 3613--3641.





















\bibitem{brown} J. Brown, G.  Nagy, S. Reznikoff, A. Sims, D. Williams,  {\it Cartan subalgebras in $C^*$-algebras of Hausdorff \'etale groupoids}, 
Integral Equations Operator Theory {\bf 85} (2016), no. 1, 109--126. 


\bibitem{toke} T.M. Carlsen, E. Ruiz, A. Sims, M. Tomforde,  {\it Reconstruction of groupoids and C*-rigidity of dynamical systems}, arXiv:1711.01052.


\bibitem{carl} T.M. Carlsen, {\it $*$-isomorphism of Leavitt path algebras over $\mathbb Z$}, Adv. in Math. {\bf 324} (2018) 326--335. 

\bibitem{ccn} L. O. Clark. C. Gil Canto, A. Nasr-Isfahani, {\it The cycline subalgebra of a Kumjian--Pask algebra}, Proc. Amer. Math. Soc. $\mathbf{145}$ (2017), 1969-1980. 

\bibitem{ch}  L.O. Clark, R. Hazrat, {\it \'Etale groupoid and Steinberg algebras, a concise introduction},  arXiv:1901.01612. 


\bibitem{cep} L.O. Clark, R. Exel, E. Pardo, {\it A generalised uniqueness theorem and the graded ideal structure of Steinberg algebras}, Forum Mathematicum, $\mathbf{30}$ (2018), no. 3, 533--552.
















\bibitem{gil} C. Gil Cantoa, A. Nasr-Isfahani {\it The commutative core of a Leavitt path algebra}, J. of Algebra {\bf 511} (2018) 227--248. 










































    
    






\bibitem{nagy} G. Nagy, S. Reznikoff, {\it Abelian core of graph algebras,} J. London Math. Soc. {\bf 85} (2012) 889--908. 










\bibitem{rr} J. Renault, {\it Cuntz-like algebras}, Operator theoretical methods. (Timioara, 1998), Theta Found (2000) 371--386. 

\bibitem{renault} J. Renault, A groupoid approach to $C^{*}$-algebras, Lecture Notes in Mathematics, {\bf 793}  Springer, Berlin, 1980.



\bibitem{st} B. Steinberg, {\it A groupoid approach to discrete inverse semigroup
    algebras}, Adv. Math. $\mathbf{223}$ (2010), 689--727.
    
\bibitem{st2} B. Steinberg algebra, {\it Diagonal-preserving isomorpisms of \'etale groupoid algebras}, J. Algebra $\mathbf{518}$ (2019), 412--439.












    


\bibitem{we} S.B.G. Webster, {\it The path space of a directed graph}, Proc. Amer. Math.
    Soc. $\mathbf{142}$  (2014), 213--225.

    
 \end{thebibliography}
\end{document}